\newtheorem{thm}{Theorem}[section]
\theoremstyle{definition}
\theoremstyle{remark}
\theoremstyle{plain}
\newtheorem{theorem} {Theorem}[section]
\newtheorem{lemma}[theorem]{Lemma}
\theoremstyle{definition}
\newtheorem{remark}[theorem]{Remark}
\newcommand{\norm}[1]{\ensuremath{\left\|#1\right\|}}
\newcommand{\abs}[1]{\ensuremath{\left|#1\right|}}
\newcommand{\Om}{\ensuremath{\Omega}}
\newcommand{\om}{\ensuremath{\omega}}
\newcommand{\pa} {\partial}
\newcommand{\pOm}{\ensuremath{\partial \Om}}
\newcommand{\eps}{\varepsilon}
\newcommand{\al} {\alpha}
\newcommand{\de} {\delta}
\newcommand{\De} {\Delta}
\newcommand{\la} {\lambda}
\newcommand{\f}{\frac}
\newcommand{\tf}{\tfrac}
\newcommand{\I}{\infty}
\newcommand{\R}{\ensuremath{\mathbb{R}}}
\newcommand{\be} {\begin{equation}}
\newcommand{\ee} {\end{equation}}
\newcommand{\bea} {\begin{eqnarray}}
\newcommand{\eea} {\end{eqnarray}}
\newcommand{\Bea} {\begin{eqnarray*}}
\newcommand{\Eea} {\end{eqnarray*}}
\numberwithin{equation}{section} \allowdisplaybreaks
\numberwithin{equation}{section} \allowdisplaybreaks
\begin{document}
%
%

\title{On a problem of resonance with exponential nonlinearity}

 \author{ B. B. Manna\footnote{ B. B. Manna, TIFR CAM , Bangalore, email: bhakti@math.tifrbng.res.in} ,
 P. N. Srikanth \footnote{P. N. Srikanth ,TIFR CAM , Bangalore, email: srikanth@math.tifrbng.res.in}}


\date{\today}
\maketitle
\begin{abstract} Consider the following semilinear elliptic problem on $B=\{x\in \R^2 : \abs{x}<1\}$
\begin{equation}
  \left\{\begin{aligned}
  -\De u &= \la_1u+e^u+f,  &&\mbox{ \qquad in } B \notag\\
   u &= 0  &&\mbox{   \qquad on } \partial B\\
  \end{aligned}
  \right.
 \end{equation}
with $f$ satisfying the following condition : $f$ is smooth integrable radial and satisfies \be 0<-\int_B f\phi_1<8\pi.\notag\ee Where $\phi_1$ is the eigen 
function of $(-\Delta)$ corresponding to the first eigenvalue $\la_1$ in $H_0^1(B)$. We shall find the existence of a radial solution of this
PDE. We shall use degree theory to get the existence starting from a suitable with known solution with its degree. Connecting those two PDE's 
by homotopy and getting the uniform estimate for the connecting PDE's we shall achieve our result. 
\end{abstract}

\section{Introduction}
Existence of solutions for semilinear elliptic Dirichlet problems
\begin{equation}
  \left\{\begin{aligned}
  -\De u &= g(x,u),  &&\mbox{ \qquad in } \Om \label{E0.0}\\
   u &= 0  &&\mbox{   \qquad on } \pOm\\
  \end{aligned}
  \right.
 \end{equation}
with distinct behavior of \be \f{g(x,s)}{s} \mbox{ as } s\to \pm\I\notag\ee is difficult to establish in the case when
\begin{itemize}
 \item [I.] $g(x,0)\neq 0$ (so there is no trivial solution).
 \item [II.] There is resonance in one direction, and
 \item[III.] The problem is superlinear in the other.
\end{itemize}
The problem seems to be particularly harder to deal if such a resonance is at the first eigenvalue of the laplacian, in view of the fact that 
the corresponding first eigenfunction has a definite sign. It is a problem of this kind that we will treat here, namely 
 \begin{equation}
  \left\{\begin{aligned}
  -\De u &= \la_1u+e^u+f,  &&\mbox{ \qquad in } B \label{E0.1}\\
   u &= 0  &&\mbox{   \qquad on } \partial B\\
  \end{aligned}
  \right.
  \end{equation}
 where $B=\{x\in \R^2 : \abs{x}<1\}$ with $f$ is a smooth integrable radial function and $\phi_1$ is the eigen function 
 of $(-\Delta)$ corresponding to the first eigenvalue $\la_1$ in $H_0^1(B)$. Solving \ref{E0.1} is particularly hard since $e^u$ is in some 
 sense the critical nonlinearity (see\cite{MR1132783}). Also see \cite{MR1989831} where the restriction on the exponent $p$ is essentially due 
 to lack of apriori bounds. In some sense, specially in the context of the approach we have adapted the result seems optimal (see \cite{MR2287880}),
 which seems to indicate that the bounds are hard to establish. See also the paper of McKenna and Rauch \cite{MR491053}, where apart from other 
 nonlinearities the case $-e^u$ has been studied. Note that in our case we have considered $+e^u$.
 
 Let us assume that $\phi>0$ in $B$ and 
 \be \int_B \phi_1^2=1 \text{, and } \int_B \abs{\nabla \phi_1}^2=\la_1\label{0.1}\ee Multiplying the equation $\ref{E0.1}$ by $\phi_1$ and
 integration by parts we get \be \int_B e^u\phi_1 + \int_B f\phi_1=0 \text{, i.e. } \int_B f\phi_1=-\int_B e^u\phi_1<0.\label{0.2}\ee The above 
 condition is necessary for the existence of the solution. The aim is to see if this is also sufficient condition. In this article we shall study
 the equation in the context of the radial solutions in $C^{1,\al}_{rad}(B)$ and shall prove the following result. 
 
 \begin{thm}\label{Th1.1}
  If \be 0<-\int_B f\phi_1<4\pi,\label{0.3}\ee then the equation(\ref{E0.1}) has a nontrivial radial solution in $C^{1,\al}_{rad}(B)$.
 \end{thm}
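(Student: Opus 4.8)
The plan is to use a degree-theoretic continuation argument, as announced in the abstract. I would work in the space of radial functions, say $X = \{u \in C^{1,\alpha}(\overline{B}) : u \text{ radial}, u|_{\partial B} = 0\}$ (or an $H^1_{0,\mathrm{rad}}$ variant), and reformulate $-\Delta u = \lambda_1 u + e^u + f$ as a fixed-point equation $u = K(\lambda_1 u + e^u + f)$ where $K = (-\Delta)^{-1}$ with Dirichlet data; since the embedding of $X$ into the relevant source space is compact and $r > 2$ gives $W^{2,r}$-regularity in dimension $2$, the associated map is a compact perturbation of the identity, so the Leray--Schauder degree is defined. I would then build a homotopy $-\Delta u = \lambda_1 u + t\,e^u + f_t$, $t \in [0,1]$, connecting \eqref{E0.1} at $t=1$ to a problem at $t=0$ whose solution set and degree are known --- for instance the linear-resonance problem $-\Delta u = \lambda_1 u + f_0$ with $f_0$ chosen orthogonal to $\phi_1$ (solvable, with a computable degree via the Fredholm alternative on the orthogonal complement), deforming $f$ to $f_0$ while keeping $\int_B f_t \phi_1 < 0$ along the way, or alternatively scaling the exponential term in and the source appropriately.

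The heart of the matter is the \emph{a priori estimate}: I must show that all radial solutions of the homotopy family stay in a fixed ball of $X$, so that the degree is homotopy-invariant and the nonvanishing degree at $t=0$ forces a solution at $t=1$. Here the hypothesis $0 < -\int_B f\phi_1 < 4\pi$ enters decisively. Testing the equation against $\phi_1$ gives $\int_B t\,e^u \phi_1 = -\int_B f_t \phi_1 < 4\pi$, which bounds the weighted mass of $e^u$; combined with radial symmetry (so $u(r)$ is monotone-controlled and the problem is genuinely one-dimensional after the substitution $r = e^{-s/2}$ or via the Emden--Fowler change of variables), the constant $4\pi$ is exactly the Moser--Trudinger / Brezis--Merle threshold in two dimensions that prevents blow-up: a mass of $e^u$ below $4\pi$ localized at a point is ruled out, so $\int_B e^u$ itself is bounded, hence $e^u + f_t$ is bounded in $L^1$ and in fact (again using radiality to upgrade) in $L^r$ on compact subsets, and elliptic estimates close the loop to give a uniform $C^{1,\alpha}$ bound.

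More concretely, I would (i) decompose $u = c\,\phi_1 + w$ with $w \perp \phi_1$ and show $\|w\|$ is controlled by $\|e^u + f_t\|$ on the invertible part; (ii) obtain the key $L^1$ bound on $e^u$ from the Brezis--Merle-type inequality together with the $<4\pi$ mass bound --- in the radial case this can be made elementary by integrating the ODE $-(r u')' = r(\lambda_1 u + t e^u + f)$ and using that $e^u \phi_1$ has small total integral to preclude the logarithmic singularity $u \sim -2\log|x|$ near the origin; (iii) bootstrap to a uniform bound on $\|u\|_X$; and (iv) conclude by invariance of degree. The main obstacle I anticipate is step (ii): controlling the behavior of $u$ near $r = 0$, since radial solutions of $-\Delta u = e^u$-type equations concentrate precisely there, and one must show the sub-$4\pi$ mass condition genuinely prevents this concentration uniformly along the homotopy --- this is where the sharpness of the constant $4\pi$ in the theorem comes from, and where the argument of \cite{MR2287880} suggesting optimality is relevant. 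A secondary technical point is ensuring the chosen homotopy keeps the strict inequality $-\int_B f_t\phi_1 < 4\pi$ and a lower bound away from $0$ uniformly, and that no solutions escape to $t$-dependent boundary of the ball; both should follow once the a priori bound is in hand.
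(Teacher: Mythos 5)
Your overall strategy --- Leray--Schauder degree on a radial function space, a homotopy from a known problem, and uniform a priori bounds powered by the pairing with $\phi_1$ and the Brezis--Merle $4\pi$ threshold --- matches the architecture of the paper's proof. But there is a genuine gap at the foundation of the degree argument: your proposed comparison problem at $t=0$ does not deliver a nonzero degree. You take the endpoint to be the linear resonance problem $-\Delta u = \lambda_1 u + f_0$ with $f_0\perp\phi_1$. That problem is solvable, but its solution set is the entire affine line $u_0 + c\phi_1$, $c\in\R$: the operator $I - K(\lambda_1\,\cdot + f_0)$ has degenerate (non-isolated) zeros, so the Leray--Schauder degree of the map relative to a ball is either undefined (zeros on the boundary) or vanishes, and in any case cannot be pinned down to a nonzero value by the Fredholm alternative alone. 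Moreover, your alternative reading --- keeping $\int_B f_t\phi_1 < 0$ strictly all the way to $t=0$ --- lands on $-\Delta u = \lambda_1 u + f_0$ with $\int f_0\phi_1\neq 0$, which has \emph{no} solution by the Fredholm alternative, hence degree $0$. Either way the continuation cannot start. The paper avoids this precisely by adding a compactly supported, bounded nonlinear perturbation, $g(u)=\sin u$ truncated outside $[-\pi,\pi]$, and showing (via the Sturm comparison with the eigenvalue problem, using $\lambda_2 - \lambda_1 > 1$ after rescaling $g$ by $\eps$) that $u=0$ is the \emph{unique} solution, that it is non-degenerate, and that the degree is $-1$. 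That perturbation is essential and is missing from your proposal.

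On the a priori estimate, your outline correctly identifies the ingredients --- the decomposition $u = c\phi_1 + w$, the identity $t\int e^{u}\phi_1 = -\int f_t\phi_1$, radiality to confine blow-up to the origin, and Brezis--Merle with mass $<4\pi$ --- but it understates the work required to close them. The paper's argument is considerably more delicate: it first shows that any critical point $R_t$ carrying a blowing-up value must migrate to the origin; then it establishes a uniform $L^2$ bound on $\omega_t$ via a normalized limiting argument (and the maximum principle to compare $\omega_n$ with an auxiliary $H_n$); and then it bounds the $\phi_1$-coefficient $T_t$ by a separate two-case analysis (whether $t_n\to 0$ or $t_n\to t_0\neq 0$, and whether $T_n\to+\infty$ or $-\infty$), with the Brezis--Merle estimate applied to $v_n = \omega_n - \widetilde{\omega}_n$ only after the $L^1$ mass of $V_n e^{v_n}$ has been shown to stay below $4\pi$. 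Your sketch collapses this into ``the mass is $<4\pi$, so Brezis--Merle applies,'' but the homotopy term $(1-t)g(u_t)$ and the splitting $T_t\phi_1+\omega_t$ both contaminate the clean mass identity, and controlling them is exactly where the paper spends most of its effort. So: right framework, right sharp constant, but (i) the chosen $t=0$ endpoint breaks the degree argument, and (ii) the a priori bound needs the decomposition and case analysis actually carried out rather than asserted.
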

 Later in the last section we shall extend this result for the case $0<-\int_B f\phi_1<8\pi$ too.
 
 Our proof uses the well known degree and homotopy arguments. The required bounds for the homotopy, 
 established using the results of Brezis and Marle in \cite{MR1132783}. Let us recall the main result of Brezis, Marle, \cite{MR1132783}
 \begin{thm}{\emph{[BM-1]}}\label{ThBM.1}
  Assume $\; \Om \subset \mathbb{R}^{2} \;$ is bounded domain and let $\; u\;$ be
  a solution of  
   \begin{equation}
  \left\{\begin{aligned}
  -\Delta u &= f(x) &&\quad \mbox{in}\quad \Omega  \label{E:intro1}\\
   u &= 0 &&\quad \mbox{in}\quad \partial \Omega
   \end{aligned}
  \right.
  \end{equation}
   with $\; f \in L^{1}(\Omega) $. Set $\; \|f\|_{1} = \int_{\Omega} |f(x)|\; dx\; $ .\\ 
   For every $\; \delta \in (0, 4\pi)\;$ we have
  \begin{align} \label{E:intro2}
  \int_{\Omega} \exp \left[\f{(4\pi - \delta) |u(x)|}{\|f\|_{1}}\right]\, dx \leq
  \f{4 \pi^{2}}{\delta}
  (\text{diam}\; \Omega)^{2}
  \end{align}
 \end{thm}
 
 \begin{thm}{\emph{[BM-2]}}\label{ThBM.2}
  Let $\{u_n\}_n$ be a sequence of solutions of \be -\De u_n = V_n(x)e^{u_n}, \ \text{ in } \Om,\label{EBM}\ee where $\Om$ is a bounded domain
  in $\R^2$, satisfying $u_n=0$ on $\pa\Om$ such that \be \norm{V_n}_{L^p}\le C \text{ for some } 1<p\le\I,\label{BMC1}\ee and
  \be \int_\Om V_ne^{u_n}<\eps_0<4\pi/p' \text{ for all } n.\label{BMC2} \ee Then $\norm{u_n}_{L^\I}\le C$
 \end{thm}

 The next theorem is regarding the Uniform $L^\I$ bounds and blow-up, behavior for solutions of $-\De u = V(x)e^u$
 \begin{thm}{\emph{[BM-3]}}\label{ThBM.3}
  Let $\{u_n\}_n$ be a sequence of solutions of (\ref{EBM}), satisfying for some $1<p\le\I$
  \begin{itemize}
   \item [(i)] $V_n\ge 0$ in $\Om$,
   \item [(ii)] $\norm{V_n}_{L^p}\le C_1$,
   \item [(iii)] $\norm{e^{u_n}}_{L^{p'}}\le C_2$. 
  \end{itemize}
  Then there exists a subsequence $\{u_{n_k}\}$ satisfying one of the following alternatives
  \begin{itemize}
   \item [(a.)] $\{u_{n_k}\}_k$ is bounded in $L^\I_{loc}(\Om)$,
   \item [(b.)] $u_{n_k}\to -\I$ uniformly on compact subsets of $\Om$,
   \item [(c.)] the blow-up set $S$ (relative to $\{u_{n_k}\}$) is finite, nonempty and $u_{n_k}\to -\I$ uniformly on compact subsets of
                $\Om\backslash S$. In addition $V_{n_k}e^{u_{n_k}}$ converges in the sense of measures on $\Om$ to $\sum_i \al_i\de_{a_i}$
                with $\al_i\ge 4\pi/p'$ for all $i$ and $S=\cup_i\{a_i\}$.

  \end{itemize}
 \end{thm}

 \section{The comparison equation}
 Consider the equation 
 \begin{equation}
  \left\{\begin{aligned}
  -\De u = \la_1 u+g(u),  &\mbox{ \qquad in } B \label{E1.1}\\
   u = 0  &\mbox{   \qquad on } \partial B\\
  \end{aligned}
  \right.
 \end{equation}
 
 where 
 \begin{equation}
  g(t)=\left\{\begin{aligned}
  &sint &&\text{ if } -\pi\le t \le \pi \label{2.1}\\
  & 0 &&\text{ otherwise }  
 \end{aligned}
 \right.
\end{equation}

 \begin{thm}
  If $\la_2-\la_1>1$ then the equation (\ref{E1.1}) has only $0$ solution, and the solution is non-degenerate and the 'L-S Degree' is $-1$.
 \end{thm}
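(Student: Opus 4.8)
The plan is to show three things: (i) $u=0$ is the only solution; (ii) the linearization at $0$ is non-degenerate; (iii) the Leray–Schauder degree equals $-1$. First I would set up the abstract framework: write (\ref{E1.1}) as $u = K(\lambda_1 u + g(u)) =: T(u)$, where $K = (-\Delta)^{-1}$ with zero Dirichlet data is the compact solution operator on $X = C(\overline B)$ (or $H_0^1(B)$). Since $g$ is bounded ($\abs{g(t)}\le 1$ everywhere), $T$ maps $X$ into a fixed ball, so $T$ has no solutions outside a large ball $B_R$ and the degree $\deg(I-T, B_R, 0)$ is well defined. The heart of the matter is that $g$ is globally Lipschitz with Lipschitz constant $1$ (since $\abs{g'}\le 1$ on $[-\pi,\pi]$ and $g\equiv 0$ outside, with the one-sided slopes matching at $\pm\pi$ where $g=0$): this is exactly the structural reason the hypothesis $\lambda_2-\lambda_1>1$ will be used.

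For uniqueness, suppose $u_1,u_2$ both solve (\ref{E1.1}); set $w=u_1-u_2$. Then $-\Delta w = \lambda_1 w + \bigl(g(u_1)-g(u_2)\bigr)$, and multiplying by $w$ and integrating, $\int_B\abs{\nabla w}^2 - \lambda_1\int_B w^2 = \int_B (g(u_1)-g(u_2))w \le \int_B w^2$ by the Lipschitz bound. Expanding $w$ in the orthonormal eigenbasis $\{\phi_k\}$ of $-\Delta$ on $H_0^1(B)$, the left side is $\sum_k(\lambda_k-\lambda_1)\abs{\hat w_k}^2 \ge (\lambda_2-\lambda_1)\sum_{k\ge 2}\abs{\hat w_k}^2$ (the $k=1$ term vanishes), and the right side is $\abs{\hat w_1}^2 + \sum_{k\ge 2}\abs{\hat w_k}^2$. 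So $(\lambda_2-\lambda_1-1)\sum_{k\ge 2}\abs{\hat w_k}^2 \le \abs{\hat w_1}^2 - \lambda_1\abs{\hat w_1}^2\cdot 0$… one must be careful here: the $k=1$ contribution to the right side is $\int_B(g(u_1)-g(u_2))\phi_1\,\hat w_1$, which need not vanish, so a cleaner route is to use that $g$ is \emph{nondecreasing} on $[-\pi,\pi]$ — hence $(g(u_1)-g(u_2))w$, after a truncation argument handling the flat tails, still yields monotonicity in the resonant direction. I expect this to be the main obstacle: controlling the $\phi_1$-component, since resonance at $\lambda_1$ is precisely where the direct energy estimate is borderline. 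The fix is to test instead with $w$ and separately project onto $\phi_1$: from $-\Delta w=\lambda_1 w+(g(u_1)-g(u_2))$, projecting onto $\phi_1$ gives $0 = \int_B(g(u_1)-g(u_2))\phi_1$; since $g$ is nondecreasing and $\phi_1>0$, this forces $g(u_1)=g(u_2)$ a.e., whence $-\Delta w=\lambda_1 w$ with zero boundary data, so $w\in\mathrm{span}\{\phi_1\}$, i.e. $w=c\phi_1$; but then $g(u_1)=g(u_2)$ with $u_2=u_1-c\phi_1$ and $\phi_1>0$ forces, on the set where $u_1$ ranges inside $(-\pi,\pi)$ where $g$ is strictly increasing, $c=0$. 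One still needs that the flat region cannot absorb a nonzero $c\phi_1$: here $u=0$ is a solution so testing $u_1=0$ shows any other solution has $g(u_2)\equiv 0$, i.e. $\abs{u_2}\ge\pi$ where $u_2\ne 0$; but $u_2=-c\phi_1$ is radial with a single sign, vanishing on $\partial B$, so it passes through $(-\pi,\pi)$ near the boundary, contradiction unless $c=0$.

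For non-degeneracy and the degree, the linearization at $0$ is $v\mapsto v - K(\lambda_1 v + g'(0)v) = v - K((\lambda_1+1)v)$ since $g'(0)=\cos 0=1$. This is invertible iff $\lambda_1+1$ is not an eigenvalue of $-\Delta$, i.e. iff $\lambda_1+1\ne\lambda_k$ for all $k$; the hypothesis $\lambda_2-\lambda_1>1$ gives $\lambda_1+1<\lambda_2$, and $\lambda_1+1>\lambda_1$, so $\lambda_1+1$ lies strictly between consecutive eigenvalues — non-degeneracy holds. Finally, by the uniqueness just proved, $\deg(I-T,B_R,0)=\deg(I-T,B_\rho(0),0)$ for small $\rho$, which by the standard index formula equals $(-1)^{\beta}$ where $\beta$ is the sum of algebraic multiplicities of eigenvalues of the compact linear operator $u\mapsto K((\lambda_1+1)u)$ that exceed $1$; equivalently, the number (with multiplicity) of eigenvalues $\lambda_k$ of $-\Delta$ with $\lambda_k < \lambda_1+1$. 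Since $\lambda_1<\lambda_1+1<\lambda_2$, exactly one eigenvalue ($\lambda_1$, which is simple on the radial space) lies below $\lambda_1+1$, so $\beta=1$ and the degree is $(-1)^1=-1$. I would close by remarking that all of this is carried out in the radial subspace $X_{\mathrm{rad}}$, on which $\lambda_1$ is still simple and $K$ still compact, so the degree computation is unaffected.
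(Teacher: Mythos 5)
Your treatment of non-degeneracy and of the Leray--Schauder index at $0$ is correct and matches the paper: $g'(0)=1$, the hypothesis $\lambda_2-\lambda_1>1$ places $\lambda_1+1$ strictly between $\lambda_1$ and $\lambda_2$, so the linearization $I-K((\lambda_1+1)\cdot)$ is invertible, and exactly one eigenvalue of $-\Delta$ lies below $\lambda_1+1$, giving degree $(-1)^1=-1$. The gap is in the uniqueness step.

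Your fix for the resonant direction rests on the claim that $g$ is nondecreasing, and this is false: $g(t)=\sin t$ on $[-\pi,\pi]$ is increasing on $[-\pi/2,\pi/2]$ but decreasing on $[\pi/2,\pi]$ and on $[-\pi,-\pi/2]$, vanishing at $\pm\pi$. So the implication ``$\int_B(g(u_1)-g(u_2))\phi_1=0$ plus monotonicity of $g$ forces $g(u_1)=g(u_2)$ a.e.'' cannot be invoked; and even for a genuinely monotone $g$ that conclusion would require a pointwise sign on $g(u_1)-g(u_2)$, which you do not have. Your energy estimate correctly isolates the difficulty --- it only bounds $\sum_{k\ge 2}\abs{\hat w_k}^2$ by $\abs{\hat w_1}^2$ --- but then you have no handle on $\hat w_1$.

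The paper closes this gap with a different mechanism, and the key ingredient you are missing is a \emph{sign-changing} constraint. From $\int_B g(u)\phi_1=0$ the paper first deduces that any nonzero solution must change sign: if, say, $u\ge 0$, then $g(u)\ge 0$ with $g(u)>0$ near $\partial B$ (where $u$ is small and positive), so the integral would be strictly positive. It then rewrites (\ref{E1.1}) as the weighted linear eigenvalue problem $-\Delta u=\bigl[\lambda_1+g(u)/u\bigr]u$, using $0\le g(u)/u\le 1$ so that $\lambda_1+g(u)/u\le\lambda_1+1<\lambda_2$. Eigenvalue comparison then shows that the second eigenvalue $\mu_2$ of $-\Delta\psi=\mu\bigl[\lambda_1+g(u)/u\bigr]\psi$ is strictly greater than $1$; but a sign-changing $u$ would realize the value $1$ at some index $k\ge 2$, a contradiction. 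It is the sign-changing property that rules out the resonant direction, which is precisely what your Lipschitz estimate cannot see. You should replace the monotonicity argument with this spectral comparison (or supply an independent argument to kill the $\phi_1$-component of $w$).
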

 
 \begin{proof}
  First note that any non-zero $H_0^1$ solution of the equation (\ref{E1.1}) has to change sign. Multiplying (\ref{E1.1}) by $\phi_1$ and 
  integrating by parts we get \be \int_B g(u)\phi_1=0.\notag\ee hence $u$ has to change sign. Let $u\neq 0$ be a solution of (\ref{E1.1}). 
  Defining $g(u)/u=1$ at $u=0$ we can re-write the equation (\ref{E1.1}) as
  \be -\De u=\Big[\la_1+\f{g(u)}{u}\Big]u.\label{E1.2}\ee Note that $g(u)/u\le 1$ for all $x\in B_1$ and $<1$ on a positively measured 
  subset of $B_1$,(as $u$ is non-zero solution). Consider the following eigenvalue problems 
  
  \begin{equation}
  \begin{array}{lll}
  -\De u = \mu\Big[\la_1+\f{g(u)}{u}\Big]u,  \text{  in } B,   &u=0 \text{ on } \pa B\label{E1.3}\\
  \end{array}
  \end{equation}
  
  \begin{equation}
  \begin{array}{lll}
  -\De u = \mu\la_2u,  \text{ in } B,  & u=0 \text{ in } \pa B\label{E1.4}\\
  \end{array}
  \end{equation}
  
  Note that $\la_1+\f{g(u)}{u}\le\la_2$ and the strict inequality holds on a +vely measured set. Hence we get 
  \be \mu_k\Big[\la_1+\f{g(u)}{u}\Big]>\mu_k(\la_2), \forall k.\label{2.2}\ee Now $u$ being a sign changing solution and 
  $\mu_2(\la_2)=1$ implies that $\mu_k\Big[\la_1+\f{g(u)}{u}\Big]>1, \forall k\ge 2$. Now $u$ being a sign-changing solution of 
  (\ref{E1.3}) we have $\mu_k\Big[\la_1+\f{g(u)}{u}\Big]=1$, which is contradictory. Hence $0$ is only solution of (\ref{E1.1}).
  
  The linearized equation of (\ref{E1.3}) at $0$ is \be -\De v = (\la_1+1)v,  \text{ in } B, v=0 \text{ in } \pa B.\label{E1.5}\ee Now 
  $\la_2>\la_1+1$ implies $0$ is the only solution of (\ref{E1.5}) and hence $0$ is the non-degenerate solution of (\ref{E1.1}).
  Also note that $0$ being only solution of the equation, the degree of the solution is $-1$.
  
 \end{proof}
 \begin{remark}
  In the context of our theorem, there is no loss of generality by assuming $\la_2-\la_1>1$, since we can always replace 
  $g(u)$ by $\eps g(u)$ for the comparison equation.
 \end{remark}

 As we have mentioned earlier that we shall use homotopy argument to prove our result and hence we consider the following equation 
 \begin{equation}
  \left\{\begin{aligned}
  -\De u_t &= \la_1 u_t+t(e^{u_t}+f)+(1-t)g(u_t),  &&\mbox{ \qquad in } B \label{E1.6}\\
   u_t &= 0  &&\mbox{   \qquad on } \partial B\\
  \end{aligned}\right.
 \end{equation}
   
 We shall show that the solutions $u_t$ is bounded uniformly in $L^\I(B)$. 
 
 \begin{lemma}\label{L2.1}
  Let $R_t$ be a critical point of $u_t$ such that $\exists \ t_0\in[0,1]$ with $\lim_{t\to t_0}u_t(R_t)\to\I$. Then $R_t\to 0$ as $t\to t_0$.
 \end{lemma}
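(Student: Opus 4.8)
The plan is to argue by contradiction, using the single a priori bound that survives the homotopy. Suppose the assertion fails: then there are $\eps_0>0$ and a sequence $t_n\to t_0$ with $M_n:=u_{t_n}(R_{t_n})\to\infty$ and $R_{t_n}\ge\eps_0$ for all $n$, and, along a subsequence, $R_{t_n}\to\bar R$ with $\bar R\in[\eps_0,1)$. (Since $u_t(1)=0$ we have $R_{t_n}<1$; that $\bar R$ also stays away from $1$ is an easy boundary estimate I would record separately, and the value $t_0=0$ is likewise excluded separately, cf.\ the last paragraph, so I assume $t_0>0$.) Multiplying (\ref{E1.6}) by $\phi_1$ and integrating by parts gives
\[
t\int_B e^{u_t}\phi_1\,dx=-\,t\int_B f\phi_1\,dx-(1-t)\int_B g(u_t)\phi_1\,dx ,
\]
and since $\int_B f\phi_1<0$ is a fixed number and $\abs{g}\le 1$, this yields $\int_B e^{u_t}\phi_1\,dx\le C_0$ uniformly for $t$ near $t_0$. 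As $\phi_1$ is bounded below by a positive constant on compact subsets of $B$, this controls $e^{u_{t_n}}$ in $L^1$ on any fixed annulus inside $B$.

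Next I localise near the critical point. For $n$ large $M_n/2>\pi$, so on the connected component $(a_n,b_n)$ of $\{r:u_{t_n}(r)>M_n/2\}$ containing $R_{t_n}$ we have $u_{t_n}>\pi$, hence $g(u_{t_n})\equiv 0$ there, and the radial form of (\ref{E1.6}) reads
\[
\big(r\,u_{t_n}'(r)\big)'=-\,r\big(\lambda_1 u_{t_n}+t_n e^{u_{t_n}}+t_n f\big)\qquad\text{on }(a_n,b_n);
\]
moreover $b_n<1$ (because $u_{t_n}(1)=0<M_n/2$), so $u_{t_n}(b_n)=M_n/2$. First, $b_n-R_{t_n}\to 0$: otherwise a fixed closed interval $J\subset(0,1)$ would lie in $(a_n,b_n)$ for all large $n$, and since $\phi_1\ge c_J>0$ on $J$ while $u_{t_n}>M_n/2\to\infty$ there, $\int_B e^{u_{t_n}}\phi_1$ would blow up, contradicting the bound. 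Hence $b_n\to\bar R$, and since $\phi_1(\bar R)>0$ the a priori bound gives $\int_{R_{t_n}}^{b_n}r\,e^{u_{t_n}(r)}\,dr\le K$ for some finite $K$ and all large $n$.

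The estimate now comes from integrating the radial equation outward from $R_{t_n}$, where $u_{t_n}'(R_{t_n})=0$. Since $\lambda_1 u_{t_n}+t_n e^{u_{t_n}}>0$ on $(a_n,b_n)$ and $t_n\le 1$, for $r\in[R_{t_n},b_n]$,
\[
\abs{r\,u_{t_n}'(r)}\le\int_{R_{t_n}}^{b_n}s\big(\lambda_1 u_{t_n}+t_n e^{u_{t_n}}\big)\,ds+\int_{R_{t_n}}^{b_n}s\abs{f(s)}\,ds .
\]
By the elementary inequality $\tau\le e^{\tau}$ the first integral is at most $(\lambda_1+1)\int_{R_{t_n}}^{b_n}s\,e^{u_{t_n}(s)}\,ds\le(\lambda_1+1)K$, and since $f\in L^r(B)\subset L^1(B)$,
\[
\int_{R_{t_n}}^{b_n}s\abs{f(s)}\,ds\le\int_0^1 s\abs{f(s)}\,ds=\frac{1}{2\pi}\norm{f}_{L^1(B)}=:C_f .
\]
Hence $\abs{u_{t_n}'(r)}\le\big((\lambda_1+1)K+C_f\big)/r\le 2\big((\lambda_1+1)K+C_f\big)/\bar R=:S$ on $[R_{t_n},b_n]$ for $n$ large — a bound independent of $n$. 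Therefore
\[
\frac{M_n}{2}=u_{t_n}(R_{t_n})-u_{t_n}(b_n)=-\int_{R_{t_n}}^{b_n}u_{t_n}'(r)\,dr\le S\,(b_n-R_{t_n})<S ,
\]
contradicting $M_n\to\infty$. Thus $\bar R=0$, i.e.\ $R_t\to 0$ as $t\to t_0$.

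The part I expect to be the real obstacle is not this final computation but securing its hypotheses: the uniform bound $\int_B e^{u_t}\phi_1\le C_0$, which forces one to isolate $t_0=0$ — there one uses the rigidity of the comparison equation proved above ($0$ is its only, and non-degenerate, solution, so a perturbative analysis near $t=0$ keeps $u_t$ bounded) — and the exclusion of $\bar R=1$ by a boundary estimate; the low regularity $f\in L^r$ is a non-issue here, as only $f\in L^1(B)$ enters. An alternative to the ODE estimate writes $-\De u_t=V_t e^{u_t}$ and applies the Brezis--Merle concentration alternative on a fixed annulus around $\bar R$: the concentration set is finite, but, being rotation invariant since each $u_{t_n}$ is radial, it can only be $\{0\}$, giving $\bar R=0$. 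I would nonetheless favour the ODE argument above, since it needs no lower bound on $u_t$, whereas controlling $V_t$ does.
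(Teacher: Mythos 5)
Your ODE estimate is clean, but it is predicated on two hypotheses that you flag but do not actually secure, and at least one of them cannot be secured the way you suggest. The uniform bound $\int_B e^{u_t}\phi_1\le C_0$ requires dividing the identity
\[
t\int_B e^{u_t}\phi_1=-t\int_B f\phi_1-(1-t)\int_B g(u_t)\phi_1
\]
by $t$, which forces $t$ to stay away from $0$. You propose to dismiss $t_0=0$ by invoking the rigidity of the comparison equation: but nondegeneracy of $u\equiv 0$ for (\ref{E1.1}) only gives, via the implicit function theorem, a \emph{local} branch of solutions near $0$ for small $t$; it in no way excludes other, large solutions $u_t$ of (\ref{E1.6}) for $t$ small, and it is precisely those that the a priori estimates of this section must control. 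Indeed the paper devotes entire case analyses (Case II in the $T_t$-boundedness lemma and in Theorem~\ref{Th2.2}) to $t_n\to 0$, so this case is neither vacuous nor disposable by perturbation; attempting to rule it out here is circular. Your second hypothesis, $\bar R<1$, is also left as an unstated ``boundary estimate'': if $\bar R=1$ then $\phi_1(\bar R)=0$ and the constant $K$ in your $L^1$ bound on $\int_{R_{t_n}}^{b_n}s\,e^{u_{t_n}}\,ds$ degenerates, so the argument as written gives nothing there.

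The paper's proof sidesteps both problems with a single move: multiply the radial ODE by $\phi_1$ and integrate by parts over $[R_t,1]$. Because $\phi_1$ is the first eigenfunction, the $\lambda_1 u_t$ terms cancel identically, and the two boundary terms vanish at $r=1$ (by $\phi_1(1)=u_t(1)=0$) and at $r=R_t$ (by $u_t'(R_t)=0$), leaving
\[
R_t\,u_t(R_t)\,\abs{\phi_1'(R_t)}=\int_{R_t}^1\bigl[t(e^{u_t}+f)+(1-t)g(u_t)\bigr]\phi_1 r\,dr .
\]
The right-hand side is then bounded by a constant \emph{uniformly in $t\in[0,1]$}, using only $\abs{g}\le1$, $f\in L^1$, and the identity $t\int_B(e^{u_t}+f)\phi_1=-(1-t)\int_B g(u_t)\phi_1$ — no division by $t$ and no pointwise control on $e^{u_t}$ are needed. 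Since $\phi_1'<0$ on $(0,1]$, this forces $R_t\to0$, including the cases $t_0=0$ and $\bar R\to1$ that your version leaves open. I would recommend replacing the outward ODE integration by this weighted test: it is the eigenfunction weight, not the localisation to $(a_n,b_n)$, that makes the $\lambda_1 u_t$ term harmless.
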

 
 \begin{proof}
  $u_t$ is radial. Hence from the equation (\ref{E1.6}) we have 
  \be -(ru_t')'=\la_1 r u_t + t(e^{u_t}+f)r+ (1-t)g(u_t)r.\notag\ee The first eigenfunction $\phi_1$ of $\De$ is also radial. So multiplying 
  the above by $\phi_1$ and integrating it by parts over $[R_t,1]$ we get
  \be \int_{R_t}^1\la_1 r u_t\phi_1dr+R_tu_t(R_t)\abs{\phi_1'(R_t)}=\int_{R_t}^1\la_1 r u_t\phi_1dr+\int_{R_t}^1[t(e^{u_t}+f)+ (1-t)g(u_t)]\phi_1rdr\notag\ee
   Hence taking $A:=\{x: R_t<\abs{x}<1\}$ we have
  \begin{align}
   R_t\abs{u_t(R_t)}\abs{\phi_1'(R_t)}
   &\le\abs{\int_{R_t}^1t(e^{u_t}+f)\phi_1rdr}+\int_{R_t}^1 (1-t)\abs{g(u_t)}\phi_1rdr\notag\\
   &\le\f{1}{2\pi}\abs{\int_A t(e^{u_t}+f)\phi_1dx}+C\notag\\
   &\le\f{1}{2\pi}\abs{\int_B t(e^{u_t}+f)\phi_1dx}+\f{1}{2\pi}\int_{B\backslash A} t\abs{f}\phi_1dx+C\notag\\
   &\le\f{1}{2\pi}\int_B (1-t)\abs{g(u_t)}\phi_1dx+C\notag\\
   &\le C\notag
  \end{align}
  Now if $R_t \nrightarrow 0$ as $t\to t_0$, we have $\abs{\phi_1'(R_t)}>0, \forall t\in (t_0-\eps, t_0+\eps)\cap I$. So we have
  \be \abs{u_t(R_t)}\le \f{C}{R_t\abs{\phi_1'(R_t)}}\le C_1, \forall t\in (t_0-\eps, t_0+\eps)\cap I.\notag\ee Which is contradictory. And hence
  we have the result.
  \end{proof}

 Let us write $u_t$ as \be u_t=T_t\phi_1+\om_t,\label{2.3}\ee such that $\int_B \om_t\phi_1=0$. Then $\om_t$ satisfies
  \begin{equation}
  \left\{\begin{aligned}
  -\De \om_n &= \la_1\om_n+t_n(e^{u_n}+f)+(1-t_n)g(u_{t_n}),  &&\mbox{ \qquad in } B \label{E1.7}\\
  \om_n &= 0  &&\mbox{   \qquad on } \partial B\\
  \end{aligned}\right.
 \end{equation}
 \begin{remark}
  The same argument of Lemma \ref{L2.1} is also valid for a critical point $R_t$ of $\om_t$, i.e. if $\om_t(R_t)\to\I$ then $R_t\to 0$
 \end{remark}

 \begin{lemma}\label{L2.2}
  $\om_t$ is uniformly $L^2(B)$ bounded. i.e. \be \sup_{t\in[0,1]}\norm{\om_t}_{L^2(B)}<\I\label{2.4}\ee
 \end{lemma}
 \begin{proof}
  If possible let us assume there is a sequence $t_n$ such that $\norm{\om_{t_n}}_{L^2(B)}\to\I$. Let us denote $\om_n=\om_{t_n}$.
  Let $\tilde{\om}_n$ satisfies
  \begin{equation}
  \left\{\begin{aligned}
  -\De \tilde{\om}_n &= \la_1\om_n+t_nf+(1-t_n)g(u_{t_n}),  &&\mbox{ \qquad in } B \label{E1.8}\\
   \tilde{\om}_n &= 0  &&\mbox{   \qquad on } \partial B\\
  \end{aligned}\right.
 \end{equation}
  Divide $\ref{E1.8}$ by $\norm{\om_n}_2$ and we have
\begin{equation}
   \left\{\begin{aligned}
  -\De \big(\tf{\tilde{\om}_n}{\norm{\om_n}_2}\big) &= \la_1\tf{\om_n}{\norm{\om_n}_2}+t_n\tf{f}{\norm{\om_n}_2}+(1-t_n)\tf{g(u_{t_n})}{\norm{\om_n}_2},  &&\mbox{ \qquad in } B \label{E1.9}\\
   \tf{\tilde{\om}_n}{\norm{\om_n}_2} &= 0  &&\mbox{   \qquad on } \partial B\\
  \end{aligned}\right.
 \end{equation}
 Note that $\norm{\f{\om_n}{\norm{\om_n}_2}}=1$ and 
 \be\lim_{n\to\I}t_n\norm{\f{f_n}{\norm{\om_n}_2}}_2=\lim_{n\to\I}(1-t_n)\norm{\f{g(u_{t_n})}{\norm{\om_n}_2}}_2=0\notag\ee
 Hence by regularity we have $\f{\tilde{\om}_n}{\norm{\om_n}_2}\in H^2\cap H_0^1$ $\norm{\f{\tilde{\om}_n}{\norm{\om_n}_2}}_{H^2}<C,\forall n$, and hence
 we have $\f{\tilde{\om}_n}{\norm{\om_n}_2}\rightharpoonup L$ in $H_0^1(B)$. Also note that $\norm{\f{\om_n}{\norm{\om_n}_2}}_2=1$ implies there
 is $N\in L^2(B)$ such that $\f{\om_n}{\norm{\om_n}_2}\rightharpoonup N$ in $L^2(B)$ and we have 
 
 \begin{equation}
  \begin{array}{lll}
  -\De L = \la_1N,  &\mbox{ \qquad in } B \label{E1.10}\\
   L = 0  &\mbox{   \qquad on } \partial B\\
  \end{array}
 \end{equation}
 Multiplying (\ref{E1.10}) by $\phi_1$ and integrating by parts we get \be \int_B(L-N)\phi_1=0.\label{2.5}\ee Now note that 
 \begin{equation}
  \begin{array}{lll}
  -\De (\om_n-\tilde{\om}_n) = t_ne^{u_{t_n}}>0,  &\mbox{ \qquad in } B \label{E1.11}\\
   \om_n-\tilde{\om}_n = 0  &\mbox{   \qquad on } \partial B\\
  \end{array}
 \end{equation}
 Hence by maximum principle we have $\om_n-\tilde{\om}_n\ge 0$ and hence 
 \begin{align}
  &\int_B\Big(\f{\om_n}{\norm{\om_n}_2}-\f{\tilde{\om}_n}{\norm{\om_n}_2}\Big)\phi_1\ge 0\notag\\
  &\lim_{n\to \I}\int_B\Big(\f{\om_n}{\norm{\om_n}_2}-\f{\tilde{\om}_n}{\norm{\om_n}_2}\Big)\phi_1\ge 0\notag\\
  &\int_B(N-L)\phi_1\ge 0\label{2.6}
 \end{align}
 Hence from (\ref{2.5}) and (\ref{2.6}) we have $N=L$. So $L$ satisfies 
 \begin{equation}
  \begin{array}{lll}
  -\De L = \la_1L,  &\mbox{ \qquad in } B \label{E1.12}\\
   L = 0  &\mbox{   \qquad on } \partial B\\
  \end{array}
 \end{equation}  
 And we get $L=l\phi_1$ for some $l\in \R$ and we have $\f{\om_n}{\norm{\om_n}_2}\rightharpoonup l\phi_1$ and hence $l=0$, also we have
 \be \f{\om_n}{\norm{\om_n}_2} \rightharpoonup 0 \text{ in } L^2(B)\text{, and } \f{\tilde{\om}_n}{\norm{\om_n}_2} \rightharpoonup 0 \text{ in } H^2(B)\notag\ee
 Then by compact embedding we get $\f{\tilde{\om}_n}{\norm{\om_n}_2}\to 0$ in $H_0^1(B)$ and hence $\f{\tilde{\om}_n}{\norm{\om_n}_2}\to 0$ in 
 $C^0(\bar{B})\cap C^1(B)$. 
 
 Note that $\phi_1$, the first positive eigenfunction 
 lies in the interior of the cone of positive functions in the space $C^0(\bar{B})\cap C^1(B)$.
 So we have \be \f{\tilde{\om}_n}{\norm{\om_n}_2}\le C\phi_1\label{2.8}\ee for some positive constant $C_1$. Now $\tilde{\om}_n$ satisfies
 \be -\De \tilde{\om}_n=\la_1 \om_n+t_n f +(1-t_n)g(u_n).\notag\ee Multiplying both sides of the above by $\om_n$ and integrating by parts we get
 \begin{align}
  &\int_B \tilde{\om}_n\Big(\la_1 \om_n+t_n(e^{u_n} + f)+(1-t_n)g(u_n)\Big)dx\notag\\
  &= \la_1\norm{\om_n}^2+\int_B \Big(t_n f\om_n+(1-t_n)g(u_n))\om_n\Big)dx\notag\\
  i.e. & \int_B \la_1\f{\tilde{\om}_n}{\norm{\om_n}_2}\f{\om_n}{\norm{\om_n}_2}+\f{1}{\norm{\om_n}_2^2}\int_B \Big(t_n\tilde{\om}_ne^{u_n} + t_n\tilde{\om}_nf+(1-t_n)\tilde{\om}_ng(u_n)\Big)dx\notag\\
  &=\la_1+\int_B t_n \f{f}{\norm{\om_n}_2}\f{\om_n}{\norm{\om_n}_2}dx+(1-t_n)\int_B\f{g(u_n)}{\norm{\om_n}_2}\f{\om_n}{\norm{\om_n}_2}dx\label{2.9}
 \end{align}
 The LHS. of (\ref{2.9}) can be represented as
 \be \int_B \la_1\f{\tilde{\om}_n}{\norm{\om_n}_2}\f{\om_n}{\norm{\om_n}_2}+\f{t_n}{\norm{\om_n}_2}\int_B e^{u_n}\Big(\f{\tilde{\om}_n}{\norm{\om_n}_2}-C_1\phi_1\Big) + t_n\int_B\f{\tilde{\om}_n}{\norm{\om_n}_2}\f{f}{\norm{\om_n}_2}\notag\ee
 \be +(1-t_n)\int_B \f{g(u_n)}{\norm{\om_n}_2}\f{\tilde{\om}_n}{\norm{\om_n}_2}dx + C_1t_n\int_B e^{u_n}\f{\phi_1}{\norm{\om_n}_2}dx\notag\ee
 Now multiplying (\ref{E1.6}) by $\phi_1$ and integrating by parts we get 
 \be -t_n\int_B e^{u_n}\phi_1=t_n\int_B f\phi_1dx+(1-t_n)\int g(u_n)\phi_1dx.\label{2.10}\ee So we have $\abs{t_n\int_B e^{u_n}\phi_1}<\I$. Hence
 \be \lim_{n\to\I}t_n\int_B \f{e^{u_n}\phi_1}{\norm{\om_n}_2}\to 0.\label{2.11}\ee Then from (\ref{2.8}) we have as $n\to\I$,
 LHS. $\le 0$. Similarly we can show that as $n\to\I$ RHS. $\to\la_1>0$ which is contradictory. Hence we have $\norm{\om_t}_2$ bounded
 uniformly. 
 \end{proof}
 
 \begin{lemma}\label{LTbound}
  $T_t$ is bounded.
 \end{lemma}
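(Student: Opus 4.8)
The plan is to combine the decomposition $u_t = T_t\phi_1 + \omega_t$ with the $L^2$-bound on $\omega_t$ already obtained and with the critical-point concentration result of Lemma~\ref{L2.1}. Suppose for contradiction that $T_{t_n}$ is unbounded along some sequence $t_n$; passing to a subsequence we may assume $T_{t_n} \to +\infty$ or $T_{t_n} \to -\infty$. Since $\int_B \omega_t = O(1)$ (from its $L^2$-bound and $|B|<\infty$) while $\int_B T_t\phi_1 = T_t\int_B\phi_1$, the sign of $T_{t_n}$ controls the behaviour of $u_{t_n}$ on most of $B$: if $T_{t_n}\to -\infty$ then $u_{t_n}\to-\infty$ on a set of nearly full measure, which forces $\int_B e^{u_{t_n}}\phi_1 \to 0$, contradicting the identity $t_n\int_B e^{u_{t_n}}\phi_1 = -t_n\int_B f\phi_1 - (1-t_n)\int_B g(u_{t_n})\phi_1$ once we check that $t_n$ stays bounded away from $0$ (for $t$ near $0$ the comparison equation forces $u_t$ bounded, so this regime is harmless). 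Hence the only dangerous case is $T_{t_n}\to+\infty$.

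In the case $T_{t_n}\to+\infty$, I would argue that $u_{t_n}$ also blows up, and then invoke the Brezis--Merle analysis referenced in \cite{MR1132783}. Write $v_n := e^{u_{t_n}}$; by \eqref{2.10} the mass $\int_B t_n v_n\phi_1$ is uniformly bounded, and since $\phi_1$ is bounded below by a positive multiple of the distance to $\partial B$, the mass $\int_{B'} v_n$ over any compact $B'\Subset B$ is uniformly bounded. The Brezis--Merle alternative then says that, up to a subsequence, either $u_{t_n}$ is locally uniformly bounded above, or it tends to $-\infty$ locally uniformly, or there is a nonempty finite blow-up set where the mass concentrates with each concentration carrying at least $4\pi$ (up to the factor coming from the coefficient, here effectively $4\pi$ because $-\Delta u = e^u + (\text{lower order})$). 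By Lemma~\ref{L2.1} any blow-up point must be the origin $R_t\to 0$, so at most one concentration point is available, and it would carry mass $\ge 4\pi$; but then $\int_B t_n e^{u_{t_n}}\phi_1 \ge 4\pi\phi_1(0)\,(1+o(1))$ up to the weight, which combined with $t_n\to t_0$ and the identity $t_n\int_B e^{u_{t_n}}\phi_1 = -t_n\int_B f\phi_1 - (1-t_n)\int_B g(u_{t_n})\phi_1 \le -\int_B f\phi_1 + C(1-t_0)$ contradicts the hypothesis $0<-\int_B f\phi_1<4\pi$ of Theorem~\ref{Th1.1} (for $t_0$ close to $1$; for $t_0<1$ bounded away from $1$ the extra term $(1-t_0)\int|g|\phi_1$ is controlled since $|g|\le1$, and one can further absorb it by the remark allowing $g$ to be replaced by $\varepsilon g$). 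The remaining Brezis--Merle alternatives — local boundedness above, or $u_{t_n}\to-\infty$ locally — are incompatible with $T_{t_n}\to+\infty$ together with the $L^2$-bound on $\omega_{t_n}$, since on a fixed compact subset $u_{t_n} = T_{t_n}\phi_1 + \omega_{t_n}$ would then have a subsequence with $\omega_{t_n}$ having arbitrarily large negative average, contradicting $\|\omega_{t_n}\|_2 \le C$.

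Concretely I would organize the proof as: (i) reduce to a sequence $t_n$ with $|T_{t_n}|\to\infty$ and note $\int_B\omega_{t_n}$ bounded; (ii) dispose of $T_{t_n}\to-\infty$ using that it forces $\int_B e^{u_{t_n}}\phi_1\to0$, contradicting \eqref{2.10} after checking $t_n\ge\delta>0$; (iii) in the case $T_{t_n}\to+\infty$, show $\sup_B u_{t_n}\to\infty$, pass to the equation $-\Delta u_{t_n} = \lambda_1 u_{t_n} + t_n(e^{u_{t_n}}+f) + (1-t_n)g(u_{t_n})$ and treat $\lambda_1 u_{t_n} + t_n f + (1-t_n)g(u_{t_n})$ as a bounded-in-$L^r$ perturbation (using the $L^2$-bound on $\omega_{t_n}$, radial symmetry, and $f\in L^r$ with $r>2$, plus Lemma~\ref{L2.1} to localize) so that Brezis--Merle applies; (iv) use Lemma~\ref{L2.1} to conclude the blow-up set is at most $\{0\}$ and hence the concentrated mass is $\ge 4\pi$; (v) combine with \eqref{2.10} and the hypothesis \eqref{0.3} to get a contradiction.

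The main obstacle I anticipate is step (iii): making rigorous that the equation for $u_{t_n}$ is, near the (single, origin-located) blow-up point, genuinely of Brezis--Merle type with the sharp constant $4\pi$, i.e.\ controlling the contributions of $\lambda_1 u_{t_n}$ and of $f$ uniformly. The term $\lambda_1 u_{t_n}$ is problematic because $u_{t_n}=T_{t_n}\phi_1+\omega_{t_n}$ is itself unbounded; one must exploit that $\phi_1$ is smooth and that, after subtracting the harmless smooth piece $\lambda_1 T_{t_n}\phi_1$ (whose sign is favorable via the maximum principle, since $-\Delta(T_{t_n}\phi_1)=\lambda_1 T_{t_n}\phi_1$), the genuinely singular part of $u_{t_n}$ solves an equation with right-hand side $t_n e^{u_{t_n}}$ plus an $L^r$ function with $r>2$, which is exactly the setting where Brezis--Merle gives quantized concentration with threshold $4\pi$. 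Getting the constant exactly $4\pi$ (not $4\pi/\text{something}$) is where the factor $t_n\to1$ and the normalization $\int_B|\nabla\phi_1|^2=\lambda_1$, $\int_B\phi_1^2=1$ must be used carefully, and this is the crux that matches the hypothesis \eqref{0.3}.
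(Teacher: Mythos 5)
Your outline swaps the roles of the two cases $T_{t_n}\to+\infty$ and $T_{t_n}\to-\infty$, and this causes a genuine gap in both.

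The claim that $T_{t_n}\to-\infty$ ``forces $\int_B e^{u_{t_n}}\phi_1\to 0$'' is false. When $T_{t_n}\to-\infty$, you only get $u_{t_n}\to-\infty$ on the complement of a small set; the small set can carry bounded (or even unbounded) mass of $e^{u_{t_n}}$, concentrating near the origin. This is precisely the blow-up scenario that requires the Brezis--Merle machinery, and it is exactly the case where the paper invokes \cite{MR1132783}. You cannot dispose of it by declaring it harmless. Nor does the fallback ``for $t$ near $0$ the comparison equation forces $u_t$ bounded'' hold: the comparison equation having only the zero solution (and it being nondegenerate) does not by itself give a priori bounds for solutions of the perturbed problem with small $t$ --- that would require an implicit-function-type argument plus compactness, which is part of what one is trying to establish. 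In the paper, the $T_{t_n}\to-\infty$ case splits into two subcases ($t_n\to0$, and $\int_B e^{u_n}\phi_1\to-\int_B f\phi_1$), and both subcases use the $4\pi$ threshold from Brezis--Merle after first controlling $\omega_n$ on an annulus $(a,1)$ and then on $B_a(0)$.

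Conversely, the case $T_{t_n}\to+\infty$ is the one the paper dispatches without Brezis--Merle, and your plan to run Brezis--Merle there is ill-suited. If $T_{t_n}\to+\infty$, then on any compact $K\subset B$ one has $u_{t_n}=T_{t_n}\phi_1+\omega_{t_n}\to+\infty$ pointwise a.e. (since $\omega_{t_n}$ is bounded from below uniformly, via the maximum-principle comparison with $\widetilde\omega_n$), so $\int_B e^{u_{t_n}}\phi_1\to\infty$. By \eqref{2.10} this forces $t_n\to0$, and then the paper shows $u_n\ge0$ for $n$ large (no sign change near the boundary, controlled by $\phi_1'(1)<0$ and the uniform bound on $\omega_n'$ at the boundary). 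Once $u_n\ge0$, one has $g(u_n)\ge0$ and $e^{u_n}+f>0$ on a positively measured set, so the left side of \eqref{2.10} is strictly positive while the identity forces it to vanish --- a direct contradiction, no Brezis--Merle needed. Your concern in step (iii) about making the equation ``genuinely of Brezis--Merle type'' with the sharp $4\pi$ is real, but it is relevant in the $T_{t_n}\to-\infty$ case, not the one where you placed it.

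In short: swap the two cases. Treat $T_{t_n}\to+\infty$ by sign considerations ($t_n\to0$, then $u_n\ge0$, then contradict \eqref{2.10}); treat $T_{t_n}\to-\infty$ by showing the blow-up can only occur at the origin (via Lemma~\ref{L2.1}), bounding $\omega_n$ uniformly on an outer annulus, controlling the concentrated mass by $4\pi$ using either the $t_n\to0$ subcase or the $-\int_B f\phi_1<4\pi$ hypothesis, and then applying the Brezis--Merle bound to $v_n=\omega_n-\widetilde\omega_n$ which satisfies $-\Delta v_n=V_ne^{v_n}$ with $V_n=t_ne^{-T_n\phi_1+\widetilde\omega_n}$ uniformly bounded in $L^\infty$.
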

 \begin{proof}
  We have taken $u_n=T_n\phi_1+\om_n$ and $\norm{\om_n}_2<\I$. 
 Define $\tilde{\om}$ as in Lemma \ref{L2.2}. We have seen that $\widetilde{\om}_n\in H^2(B)\cap H_0^1(B)$.
  And hence by Sobolev embedding theorem $\widetilde{\om}_n\in C^1(B)\cap C^0(\bar{B})$.  Now
 \begin{equation}
  \begin{array}{lll}
  -\De (\om_n-\widetilde{\om}_n) = t_ne^{u_n},  &\mbox{ \qquad in } B \label{E1.14}\\
   \om_n-\widetilde{\om}_n = 0  &\mbox{   \qquad on } \partial B\\
  \end{array}
 \end{equation}  
 By maximum principle we have $\om_n-\widetilde{\om}_n > 0$ in $B$. So $\om_n$ is bounded from below uniformly on $n$.
 
 If possible let us suppose that there is $t_n$ such that $T_n:=T_{t_n}\to\I$. Let us first show that $t_n\to 0$ as $n\to\I$. If not, let 
 up to a subsequence $t_n\to t_0\neq 0$. So for large $n$ we have from (\ref{2.10})
 \be \int_B\phi_1(e^{u_n}+f)+\f{1-t_n}{t_n}\int_Bg(u_n)\phi_1=0.\notag\ee Note that in any compact set $K\subset B$, $\phi_1e^{u_n}\to \I$
 uniformly as $\om_n$ bounded below. So the above inequality can't hold as all other terms are bounded. Hence $\lim_{n\to\I}t_n\to 0$. 
 
 Now let us show that for n large $u_n\ge0$. Divide $[0,1]$ into two fixed intervals $[0,1-\delta]$ and $(1-\delta,1]$, for some small positive
 number $\delta$. Note that there is $N_1$ such that $u_n\ge 0$ in $[0,1-\delta]$ for all $n\ge N_1$. Form (\ref{E1.14}) and using Hopf maximum 
 principle we have \be \f{\pa \widetilde{\om}_n}{\pa\eta}\ge \f{\pa \om_n}{\pa\eta}.\notag\ee Using elliptic regularity and Sobolev embedding 
 we have $\f{\pa \widetilde{\om}_n}{\pa\eta}$ is bounded uniformly on $\pa A$. And hence $\f{\pa {\om}_n}{\pa\eta}$ is bounded  uniformly on 
 $\pa A$. Note that $\phi_1'(1)<0$, implies there is $N$ large such that \be u_n'(1)=T_n\phi_1'(1)+\om_n'(1)<0.\label{2.12}\ee Hence $u_n$ is 
 positive near the boundary for $n\ge N$. Hence $u_n$ is positive near the boundary for $n$ large. Now let $u_n$ changes sign. Define
 \be a_n=\sup\{r\in(0,1):u_n(a_n)=0\}.\notag\ee Clearly $u_n'(a_n)\ge 0$. First note that $\lim_{n\to\I} a_n\to 1$, if not let up to a 
 subsequence $a_n\to a<1$. Now $u_n(a_n)=T_n\phi_1(a_n)+\om_n(a_n)\ge T_n\phi_1(a+1/n)+\om_n(a_n)\to\I$ as $n\to\I$, which is a 
 contradiction. Now note that \be u_n'(r)=T_n\phi_n'(r)+\om_n'(r)\le T_n\phi_n'(1-\delta)+\om_n'(r)\to -\I, \text{ as } n\to\I,\notag\ee
 for all $r\in(1-\delta,1)$. Now $a_n\to 1$ and $u_n'(a_n)\ge 0$ contradicts the above. Hence $u_n\ge 0$ for $n$ large.
 
 Now $u_n\ge 0$ implies $g(u_n)\ge 0$ also $e^{u_n}\to\I$ on a positive measured set, and we have 
 \be t_n\int_B e^{u_n}\phi_1+t_n\int_B f\phi_1dx+(1-t_n)\int g(u_n)dx>0,\notag\ee contradicting (\ref{2.10}). Hence $T_n\nrightarrow \I$.
 
 If possible let $T_n\to -\I$. Let us write $u_n=-T_n\phi_1+\om_n$. Then note that $T_n\to \I$. First note that 
 \be \int_A \om_n^+\phi_1=\int_A \om_n^-\phi_1<\I.\notag\ee Hence $\lim_{n\to\I}\mu\{x:\om_n^+(x)>n\}=0$. Then we have as 
 $T_n\to \I$, $\lim_{n\to\I} \mu\{x:-T_n\phi_1+\om_n(x)>-\pi\}=0$. And thus $\lim_{n\to\I}\int_A g(u_n)\phi_1=0$, as $g$ and $\phi_1$
 are both bounded.Then from (\ref{2.10}) we have either $\int_B e^{u_n}\phi_1\to-\int_Bf\phi_1$ or $t_n\to 0$ and in this case 
 $\lim_{n\to \I}\int_A t_ne^{u_n}\phi_1=0$.
 
 Case I : Let us first assume $t_n\to 0$. Define $v_n:=\om_n-\widetilde{\om}_n$. Then from (\ref{E1.14}) we have
  \begin{equation}
  \begin{array}{lll}
  -\De v_n = V_ne^{v_n},  &\mbox{ \qquad in } B \label{E1.15}\\
   v_n = 0  &\mbox{   \qquad on } \partial B\\
  \end{array}
 \end{equation} 
 Where $V_n=t_ne^{-T_n\phi_1+\widetilde{\om}_n}$. Note that $\norm{\widetilde{\om}_n}_\I<\I$ uniformly on $n$. And hence we have 
 $\norm{V_n}_\I<\I$, uniformly on $n$. Now $V_ne^{v_n}=t_ne^{u_n}$.
 
 Let us first assume that, there is $0<a<1$ such that $\om_n<0$ on $(a,1)$ for all $n$. Then from (\ref{2.10}), we have 
 \be \int_{B(0,a)}t_ne^{u_n}\le \f{1}{\phi_1(a)}[-t_n\int_B f\phi_1+(1-t_n)\int_Bg(u_n)\phi_1].\notag\ee So 
 $\lim_{n\to\I}\int_{B(0,a)}t_ne^{u_n}=0$. Now as in $(a,1), \om_n<0$, by choosing $a$ properly and $n$ large we have 
 \be \int_B V_ne^{v_n}=\int_B t_ne^{u_n}<4\pi.\label{2.13}\ee So by the result of Brezis Merle \cite{MR1132783} we have $\norm{\om_n-\widetilde{\om}_n}_\I<C$, for 
 some positive constant $C$, for all $n$. And hence we have $\norm{\om_n}<C$, for all $n$. Then using regularity form (\ref{E1.7}) we get
 $\norm{\om_n}_{C^1(\bar{B})}<C$, for all $n$. Now $\phi_1$ being in the interior of the cone of positive functions in ${C^1(\bar{B})}$
 we have \be u_n=-T_n(\phi_1-\f{\om_n}{T_n})<0, \text{ for $n$ large.}\notag\ee 
 Now choose an interval $(a,1)$ such that 
 \be \int_a^1\phi_1 dr +1/4\int_0^1 f\phi_1 dr<0.\label{2.14}\ee also in the interval $[0,a]$, $u_n\downarrow 0$ uniformly. Hence for $n$ large,
 \be \int_0^a e^{u_n}\phi_1 dr +1/4\int_0^1 f\phi_1 dr<0.\label{2.15}\ee Combining (\ref{2.14}) and (\ref{2.15}) we get
 \be \int_0^1 e^{u_n}\phi_1 dr +\int_0^1 f\phi_1 dr<0\notag\ee as $\int_0^1 f\phi_1<0$. Now note that $g(u_n)\le 0$ for $n$ large, hence
 $\int_0^1g(u_n)\le0$. So we have \be t_n\int_0^1 (e^{u_n}+f)\phi_1dr+(1-t_n)\int_0^1 g(u_n)dr<0.\notag\ee contradicting (\ref{2.10}). So $\om_n$
 has to positive value in $(a,1)$ for $n$ large.
 
 Now we shall show that in $(a,1)$, $\norm{\om_n}_{L^\I(a,1)}<C$ for some positive constant $C$ for all $n$ large. If not we have for any
 $M_n\to\I$ there is $\delta_n>0$ such that $\mu\{x : \om_n(x)>M_n\}\ge\delta_n$ and there is $p_n\in (a,1)$ such that $\om_n(p_n)>M_n$. Let
 $p_n\to p$ as $n\to\I$(up to a subsequence). Take $0<p'<\inf\{p,a\}$. claim that in $(p',a)$, $\om_n\ge M_n$ for all $n$. If not, we shall find
 $q_n$ such that $\om_n(q_n)<M_n,\ \forall n$. Hence there is a point of maxima $R_n$ of $\om_n$ in $(p',a)$ with 
 $lim_{n\to\I}\om_n(R_n)\to\I$ and $R_n\nrightarrow 0$. Now similarly as proved in lemma[\ref{L2.1}] we can show the same result for $\om_n$.
 Which gives us a contradiction. 
 
 Now choose $M$ large enough so that $\phi(a)M(p-p')>\int_{B_1(0)}\om_n^-\phi_1$. Then we have for $n$ large $M_n>M$ and
 \be \int_{B_a(0)}\om_n^+\phi_1\ge \phi(a)\int_{p'}^aMdx=\phi(a)M(p-p')>\int_{B_1(0)}\om_n^-\phi_1.\notag\ee Which is contradictory. So in $(a,1]$, 
 $\norm{\om_n}_{L^\I(a,1)}<C$.
 
 Now from \ref{2.10} we have \be \int_{B_a(0)}t_ne^{u_n}\le\f{1}{\phi_1(a)}\Big[t_n\int_B \abs{f}\phi_1dx+(1-t_n)\int_B \phi_1 dx.\Big]<\I\notag\ee
 and \be \int_{B\backslash B_a(0)}t_ne^{u_n}=\int_{B\backslash B_a(0)}t_ne^{-T_n\phi_1}e^C<C.\notag\ee
 So using theorem[\ref{Th1.1}] we conclude that $\om_n-\widetilde{\om}_n\in L^\I(B)$, and $\norm{\om_n-\widetilde{\om}_n}L^\I(B)\le C$, and
 thus we have $\norm{\om_n}L^\I(B)<C$. Using regularity we have for $n$ large $\om_n\in C^1_0(\bar{B})$. Now using the same cone condition 
 we have $-T_n\phi_1+\om_m<0$ and $-T_n\phi_1+\om_m\to-\I$ in any compact subset of $B$. Hence $(1-t_n)\int_B g(u_n)\phi_1\le 0$ for $n$ large 
 and $\int_K e^{u_n}\to 0$ as $n\to \I$ for any compact $K\subset B$. Using the fact $\int_B f\phi_1<0$ we have 
 \be t_n\int_B (e^{u_n}+f)\phi_1dx+(1-t_n)\int_B g(u_n)\phi_1dx<0.\notag\ee Which is contradictory. Hence $T_n\nrightarrow \I$. 
 
 Case II : Now let $\int_B e^{u_n}\phi_1\to-\int_Bf\phi_1$. We have $-\int_B f\phi_1<4\pi$. So for $n$ large $t_n\int_B e^{u_n}\phi_1<4\pi$.
 Now for any $0<a<1$ we have shown that $\norm{\om_n}_{L^\I(a,1)}<C$. As $T_n\to-\I$, we have $\norm{\f{\om_n}{T_n}}_{L^\I(a,1)}\to 0$
 as $n\to\I$. Hence $\int_{B\backslash B_a(0)}t_ne^{u_n}\to 0$ as $n\to\I$. Now choose $\delta$ such that 
 $-\int_b f\phi_1<4\pi(1-\delta)$. Choose $a$ such that $\phi(a)=1-\delta$. Then 
 \be t_n\int_{B_a(0)}e^{u_n}\le\f{t_n}{\phi_1(a)}\int_{B_a(0)}e^{u_n}\phi_1 dx<4\pi.\notag\ee Combining both the integrals we have 
 $t_n\int_B e^{u_n}<4\pi$ for $n$ large. And similarly as above we can arrive at the same contradiction. 
 
 Hence in both  the cases we have $T_n<C$ for some positive constant $C$.
 
 \end{proof}

 \begin{thm}\label{Th2.2}
  $\norm{u_t}_{L^\I(B)}<C$, for some positive constant $C$.
 \end{thm}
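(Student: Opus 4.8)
The plan is to argue by contradiction: the estimates already obtained localise the possible blow‑up of $u_t$ to the origin, and Brezis--Merle's estimate \cite{MR1132783} then rules it out. Suppose the conclusion fails; then there are $t_n\in[0,1]$ with $\norm{u_n}_{L^\infty(B)}\to\infty$, where $u_n:=u_{t_n}=T_{t_n}\phi_1+\om_{t_n}$. By the preceding lemmas $T_{t_n}$ and $\norm{\om_{t_n}}_{L^2(B)}$ are bounded, so $u_n$ is bounded in $L^2(B)$; moreover the analysis in the proof that $T_t$ is bounded --- in particular Lemma~\ref{L2.1} together with the radial structure --- shows that for each fixed $a\in(0,1)$ the family $u_n$ is uniformly bounded on the annulus $\{a\le\abs{x}\le1\}$. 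It therefore remains only to bound $u_n$ uniformly on one small ball $B_a:=\{\abs{x}<a\}$, with $a\in(0,1)$ to be fixed below.

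Next I would separate the linear and subcritical part of the equation. Let $\widetilde{\om}_n$ solve \eqref{E1.13}, and set $z_n:=\om_n-\widetilde{\om}_n$ and $\mathcal{H}_n:=u_n-z_n=T_{t_n}\phi_1+\widetilde{\om}_n$. Since $\om_n$ is bounded in $L^2$, $f\in L^r(\Om)\subset L^2(B)$ and $\abs{g}\le1$, the right‑hand side of \eqref{E1.13} is bounded in $L^2(B)$, so by elliptic regularity and $H^2(B)\hookrightarrow C^0(\overline B)$ one gets $\norm{\mathcal{H}_n}_{C^0(\overline B)}\le C$; and by \eqref{E1.14} and the maximum principle $z_n\ge0$ and
\[-\De z_n=t_n e^{u_n}=V_n e^{z_n}\ \text{ in }B,\qquad z_n=0\ \text{ on }\partial B,\qquad V_n:=t_n e^{\mathcal{H}_n},\quad 0\le V_n\le C.\]
Hence $u_n=z_n+\mathcal{H}_n$ is uniformly bounded on $B_a$ as soon as $z_n$ is.

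The key step is a bound on the mass of $t_n e^{u_n}$ near the origin. Multiplying \eqref{E1.6} by $\phi_1$ and integrating by parts exactly as in \eqref{2.10}, and using $\abs{g}\le1$ and $\phi_1>0$,
\[t_n\int_B e^{u_n}\phi_1\;=\;-t_n\int_B f\phi_1-(1-t_n)\int_B g(u_n)\phi_1\;\le\;t_n\Bigl(-\int_B f\phi_1\Bigr)+(1-t_n)\int_B\phi_1\;\le\;\Theta,\]
where $\Theta:=\max\bigl\{-\int_B f\phi_1,\ \int_B\phi_1\bigr\}$. Here $\Theta<4\pi$: the first entry is $<4\pi$ by \eqref{0.3}, and by Cauchy--Schwarz and \eqref{0.1}, $\int_B\phi_1\le\abs{B}^{1/2}=\sqrt{\pi}<4\pi$. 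Now $\phi_1$ is radial, strictly decreasing on $[0,1]$, with $\phi_1(0)=\max_{\overline B}\phi_1$, and $\phi_1(0)>1$ for the normalisation \eqref{0.1} on the unit disc (one has $\phi_1(r)=c\,J_0(j_{0,1}r)$ with $c=\bigl(\sqrt{\pi}\,\abs{J_1(j_{0,1})}\bigr)^{-1}\approx1.087$). As $\Theta/4\pi<1<\phi_1(0)$, fix $a\in(0,1)$ so small that $\phi_1(a)>\Theta/4\pi$; then $\phi_1\ge\phi_1(a)$ on $B_a$ gives
\[\sup_{n}\;t_n\int_{B_a}e^{u_n}\;\le\;\frac{1}{\phi_1(a)}\,\sup_{n}\;t_n\int_B e^{u_n}\phi_1\;\le\;\frac{\Theta}{\phi_1(a)}\;<\;4\pi.\]

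Finally I would apply Brezis--Merle on $B_a$. As $u_n$ and $\mathcal{H}_n$ are uniformly bounded on $\{\abs{x}=a\}$, so is $z_n$; writing $z_n=w_n+h_n$ on $B_a$ with $h_n$ harmonic in $B_a$ and $h_n=z_n$ on $\partial B_a$, the maximum principle gives $\norm{h_n}_{L^\infty(B_a)}=\norm{z_n}_{L^\infty(\partial B_a)}\le C$, while $w_n:=z_n-h_n$ solves
\[-\De w_n=V_n e^{z_n}=\widehat V_n e^{w_n}\ \text{ in }B_a,\qquad w_n=0\ \text{ on }\partial B_a,\qquad\widehat V_n:=V_n e^{h_n},\quad 0\le\widehat V_n\le C,\]
with $\int_{B_a}\widehat V_n e^{w_n}=t_n\int_{B_a}e^{u_n}<4\pi$. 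The Brezis--Merle theorem \cite{MR1132783} then yields $\norm{w_n}_{L^\infty(B_a)}\le C$, hence $\norm{z_n}_{L^\infty(B_a)}\le C$ and $\norm{u_n}_{L^\infty(B_a)}=\norm{z_n+\mathcal{H}_n}_{L^\infty(B_a)}\le C$; together with the uniform bound on $\{a\le\abs{x}\le1\}$ this contradicts $\norm{u_n}_{L^\infty(B)}\to\infty$, which would complete the proof. The point I expect to be delicate is the mass bound $\sup_n t_n\int_{B_a}e^{u_n}<4\pi$: this is exactly where hypothesis \eqref{0.3} is used, in combination with $\phi_1(0)>1$, which is what lets one pass from the $\phi_1$‑weighted mass to the plain mass on a small ball without reaching the Brezis--Merle threshold $4\pi$; the reduction to a neighbourhood of the origin is the other place where the earlier work --- Lemma~\ref{L2.1} and radial symmetry --- is used.
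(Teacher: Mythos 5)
Your proof is correct but takes a genuinely different, and in fact cleaner, route than the paper. The paper splits the argument into two cases according to whether $t_n\to t_0\neq0$ or $t_n\to0$: in the first case it bounds $\norm{e^{v_n}}_{L^1(B)}$ and invokes the blow-up alternative (Theorem~3 of \cite{MR1132783}) to get a local $L^\infty$ bound, while in the second it gets the mass bound $t_n\int_B e^{u_n}<4\pi$ by choosing $\phi_1(a)=1/2$ and uses Corollary~3 of \cite{MR1132783}. You avoid the case distinction entirely by noticing that the identity \eqref{2.10} and $\abs{g}\le1$ give $t_n\int_B e^{u_n}\phi_1$ as at most a \emph{convex combination} of $-\int_B f\phi_1$ and $\int_B\phi_1$, hence $\le\Theta:=\max\{-\int_B f\phi_1,\int_B\phi_1\}<4\pi$, uniformly in $t_n\in[0,1]$; then you convert this $\phi_1$-weighted bound to a plain mass bound on a small ball by using the numerically verified fact that the normalization \eqref{0.1} on the unit disc forces $\phi_1(0)=1/(\sqrt{\pi}\abs{J_1(j_{0,1})})\approx1.087>1>\Theta/4\pi$, so that $\phi_1(a)>\Theta/4\pi$ for $a$ small. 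This makes the hypothesis \eqref{0.3} enter in a single transparent place, and lets you apply only Corollary~3 of \cite{MR1132783} (on $B_a$, after a harmonic correction to re-zero the boundary data), sidestepping the alternatives inherent in the paper's use of Theorem~3. What you lose is some elementary character: your argument leans on the explicit Bessel-function value $\phi_1(0)>1$, which the paper's two-case route does not need (it only needs the much cruder $\phi_1(0)>1/2$ for its Case~II). What you gain is a uniform, single-pass argument that makes clear the mass near the origin is always strictly below the Brezis--Merle threshold. One small point worth spelling out if you write this up: the uniform bound on $u_n$ over $\{a\le\abs{x}\le1\}$ is not literally one of the preceding lemmas; you need, as the paper does implicitly via Lemma~\ref{L2.1} and the boundedness of $T_n$, to show that any sequence of points where $\om_n\to\infty$ must converge to the origin.
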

 \begin{proof}
  If not then there exists a sequence $t_n$ and a sequence of solutions $u_n$ such that $\norm{u_n}_{L^\I(B)}\to\I$ as $n\to\I$.
  Now expressing $u_n=t_n\phi_1+\om_n$, we have shown that $T_n$ is bounded. Hence $\norm{\om_n}_{L^\I(B)}\to\I$ as $n\to\I$. Also 
  we know that $R_n\to 0$ where $\om_n(R_n)\to\I$ and $\om_n'(R_n)=0$. And as before we can show that for any sequence $q_n$ with 
  $\om_n(q_n)\to\I$, $q_n\to 0$. Hence for any $1>\delta>0$ 
   \be\norm{u_n}_{L^\I(B\backslash B_\delta(0))}<C_\delta,\label{2.16}\ee for some $C_\delta>0$ \par
  
  Now we shall establish a contradiction for two different cases.
  
  Case I : Let $t_n\to t_0\neq 0$. Let $v_n=\om_n-\widetilde{\om}_n$. Then $v_n$ satisfies \ref{E1.15}. From \ref{2.10} we have 
  \be lim_{n\to\I} \int_B e^{u_n}\phi_1=-\int_B f\phi_1+\f{1-t_0}{t_0}\lim_{n\to\I}\int_B g(u_n)\phi_1.\label{2.17}\ee Hence there is a constant 
  $C$ such that $\int_B e^{u_n}\phi_1<C$ uniformly for large $n$. Also for any $0<a<1$ we have shown that $\om_n$ is uniformly bounded 
  in $B\backslash B_a(0)$. Hence  \be \int_{B\backslash B_a(0)} e^{v_n}=\int_{B\backslash B_a(0)} e^{\om_n-\widetilde{\om}_n}<C,\notag\ee 
  as $\widetilde{\om}_n$ bounded uniformly in $B$. Now
  \be \int_{B_a(0)}e^{v_n}=\int_{B_a(0)} e^{u_n}e^{-(T_n\phi_1+\widetilde{\om}_n)}\le\f{C}{\phi_1(a)}\int_B e^{u_n}\phi_1\le C.\label{2.18}\ee
  Combining (\ref{2.17}),(\ref{2.18}) we have $\norm{v_n}_{L^1(B)}<C$ uniformly in $n$. Also note that $V_n\ge 0$ and $\norm{V_n}_{L^\I(B)}<C$
  for all $n$. So using theorem.3(sec III.2) of \cite{MR1132783} we get $v_n$ (up-to a subsequence) is bounded in $L^\I_{loc}(B)$. Hence
  $\norm {\om_n}_{L^\I(B_a(0))}<C$, which gives along with (\ref{2.16}), $\norm {\om_n}_{L^\I(B)}<C$, which is contradictory to our 
  assumption.
  
  Case II : Let $t_n\to 0$. Then fron \ref{2.10} we have
  \be \lim_{n\to\I}t_n\int_{B}e^{u_n}\phi_1=-\lim_{n\to\I}\int_{B}g(u_n)\phi_1.\label{2.19}\ee Now from (\ref{2.16}) we get for 
  any $0<a<1$, \be\lim_{n\to\I}t_n\int_{B\backslash B_a(0)}e^{u_n}=0.\label{2.20}\ee Now choose $a$ such that $\phi_1(a)=1/2$. 
  Then from (\ref{2.19}) we have 
  \be t_n\int_{B_a(0)}e^{u_n}<\f{t_n}{\phi_1(a)}\int_B e^{u_n}\phi_1<2\int_B\phi_1+o(\f{1}{n})<3\pi.\label{2.21}\ee 
  Combining (\ref{2.20}), (\ref{2.21}) we have \be \int_B V_ne^{v_n}=t_n\int_B e^{u_n}<4\pi.\notag\ee Using Cor.3(Sec III.1) of \cite{MR1132783} we 
  get $\norm{v_n}_{L^\I(B)}<C$. That is $\norm{\om_n}_{L^\I(B)}<C$, for all $n$, which is contradictory to our assumption.  
 \end{proof}
 
 \textbf{Proof of theorem(\ref{Th1.1}):}
 \begin{proof}
  In theorem(\ref{Th2.2}) we have shown $\norm{u_t}_{L^\I}$ is uniformly bounded. Using regularity we get $\norm{u_t}_{C^{1,\alpha}}<C$,
  for some positive constant $C$. Now take $\Om\subset C^{1,\alpha}_{rad}$, where 
  $\Om:=\{u : u(x)=u(\abs{x}), u\in C^{1,\alpha}(B)\cap C^0(B),\norm{u}_{C^{1,\alpha}(B)}<C\}$. Now take
  \be S_t=I-\Delta^{-1}\{\la_1 I+t(exp\circ I+f)+(1-t)g\circ I\}.\notag\ee Note that $0\notin S(\pa\Om)$ for all $t$. So using homotopy 
  invariance we get $deg(\Om, S_0, 0)=deg(\Om, S_1, 0)=-1$. Hence the equation(\ref{E0.1}) has a radial solution. Also from the equation it is 
  obvious that the solution is nontrivial for $f\neq -1$. 
 \end{proof}
 
 \section{extension to $8\pi$}
 
  In theorem (\ref{Th1.1}) we have seen that the result is valid for $0<-\int_B f\phi_1<4\pi$. In this section we shall extend the result for
  $0<-\int_B f\phi_1<8\pi$. We recall the result of Li and Shafrir \cite{MR1322618},
  \begin{theorem}\label{ThLS}
   Suppose $V_n\in C^0(\bar{\Om})$, $V_n\ge 0$ in $\Om$ and $V_v\to V$ in $C^0(\bar{\Om})$. Let $\{u_n\}$ be a sequence of solution of (\ref{EBM})
   with $\norm{e^{u_n}}_{L^1(\Om)}\le C_1$ for some positive constant $C_1$. Assume alternative $(c)$ of theorem (\ref{ThBM.3}) holds. Then for each
   $i$, $\al_i=8\pi m_i$ for some positive integer $m_i$.
  \end{theorem}
  In lemma \ref{L2.1} we have seen that the blow-up can occur only at the origin. Let us consider a small ball $B_\de$ ($\de\ll1$) around the origin.
  Consider the equation \be -\De v_n = V_ne^{v_n} \text{ in }B_\de,\label{E3.1}\ee as in (\ref{E1.15}). Also in the proof of lemma \ref{LTbound}
  we have seen that $T_t$ can not go to $\I$ irrespective of the value of the value $-\int_B f\phi_1$. We have used the integral value 
  $-\int_B f\phi_1$ to prove $T_t$ is also bounded below. In that case when $t_n\to 0$ we have used the result of Brezis-Marle (\ref{ThBM.2}). 
  It is the other case when $\int_B e^{u_n}\phi_1\to-\int_Bf\phi_1$ we have used the integral value to get the lower bound for $T_t$. Then
  provided the condition $-\int_Bf\phi_1<8\pi$ we can show as lemma \ref{LTbound} that \be \int_B V_ne^{v_n}=\int_B t_ne^{u_n}<8\pi.\label{3.2}\ee
  
  We have seen that the possible blowup set $S$ contains only the origin i.e. $S=\{0\}$. If $v_n$ is blowing up at the origin then by the
  result of Li and Shafrir (Theorem \ref{ThLS}) we have up-to a subsequence $V_ne^{v_n}\to \al_0\de_{a_0}$ with $\al_0=8\pi m_0$, in the sense 
  of measure, for some positive integer $m_0$. which implies $\int_B V_ne^{v_n}\ge 8\pi m_0$ which is contradictory. So we have 
  $\norm{v_n}_{L^\I}<C$. The rest of the proof follows similarly. 
  
  \begin{remark}
   The condition $0<-\int_B f\phi_1<8\pi$ seems to be optimal even if we have not been able to establish this. In \cite{MR2287880} the authors 
   have analyzed the same type of equation (see equation (1.6) of \cite{MR2287880})
   as equation (\ref{E1.15}) in our case leading to blow up.
  \end{remark}

\bibliographystyle{plain}


\end{document}